\newcommand\blfootnote[1]{%
  \begingroup
  \renewcommand\thefootnote{}\footnote{#1}%
  \addtocounter{footnote}{-1}%
  \endgroup
}
\newtheorem{theorem}{Theorem}
\newtheorem{lemma}{Lemma}
\newenvironment{narrow}[2]{%
 \begin{list}{}{%
  \setlength{\topsep}{0pt}%
  \setlength{\leftmargin}{#1}%
  \setlength{\rightmargin}{#2}%
  \setlength{\listparindent}{\parindent}%
  \setlength{\itemindent}{\parindent}%
  \setlength{\parsep}{\parskip}%
 }%
\item[]}{\end{list}}
\def\begeq{\begin{equation}}
\def\endeq{\end{equation}}
\def\bbr{\begin{bmatrix*}[r]}
\def\ebr{\end{bmatrix*}}
\def\bb{\begin{bmatrix*}[r]}
\def\eb{\end{bmatrix*}}
\def\bbc{\begin{bmatrix}}
\def\ebc{\end{bmatrix}}
\def\Re{{\hbox{\rm I\kern-.2150em R}}}
\definecolor{darkmagenta}{rgb}{0.55, 0.0, 0.55}
\definecolor{darkgreen}{rgb}{0.0,0.6,0.0}
\def\Re{\mathbb{R}}
\def\R{\mathbb{R}}
\def\calL{\mathcal L}
\def\0u{\underline{0}}
\def\begeq{\begin{equation}} 
\def\endeq{\end{equation}}
\def\bcenter{\begin{center}}
\def\ecenter{\end{center}}
\def\beq{\begin{equation}}
\def\eeq{\end{equation}}
\def\bmatr{\begin{bmatrix*}[r]}
\def\ematr{\end{bmatrix*}}
\def\bmatc{\begin{bmatrix}}
\def\ematc{\end{bmatrix}}
      \theoremstyle{plain}
      \theoremstyle{plain}
      \theoremstyle{plain}
      \theoremstyle{plain}
\newcommand{\vertiii}[1]{{\left\vert\kern-0.25ex\left\vert\kern-0.25ex\left\vert #1 
    \right\vert\kern-0.25ex\right\vert\kern-0.25ex\right\vert}}
\newtheorem*{proposition*}{Proposition}
\newtheorem*{definition*}{Definition}
\newtheorem*{example*}{Example}
\newtheorem*{lemma*}{Lemma}
\newtheorem*{theorem*}{Theorem}
\newtheorem*{corollary*}{Corollary}
\newtheorem*{remark*}{Remark}
\begin{document}

\begin{center}
{\Large Slow Convergence in Generalized \\ \vskip 3pt Central Limit Theorems}
\vskip 10pt
{\Large \em Convergence Lente dans les Théorèmes  \\ \vskip 3pt Central Limite Généralisés}
\vskip 10pt
Christoph B\"orgers\footnote{Department of Mathematics, Tufts University, Medford, MA, 02155, United States.}$^{,}$\footnote{Email address: christoph.borgers@tufts.edu.} and Claude Greengard\footnote{Visiting Scholar, Courant Institute of Mathematical Sciences, New York University and Foss Hill Partners, P.O.\ Box 938, Chappaqua, NY 10514, United States.}$^{,}$\footnote{Email address: cg3197@nyu.edu.}
\vskip 10pt
\end{center}

\vskip 15pt

\begin{center}
{\bf Abstract}\blfootnote{\textcopyright 2018. This manuscript version is made available under the CC-BY-NC-ND 4.0 license\newline https://creativecommons.org/licenses/by-nc-nd/4.0/}
\end{center}

\begin{narrow}{0.41 in}{0.41 in}
We study the central limit theorem in the non-normal domain of attraction to symmetric $\alpha$-stable laws for $0<\alpha\leq2$. We show that for i.i.d.\ random variables $X_i$, the convergence rate in  $L^\infty$ of both the densities and distributions of $\sum_i^n X_i/(n^{1/\alpha}L(n))$ is at best logarithmic if $L$ is a non-trivial slowly varying function. Asymptotic laws
for several physical processes have been derived using convergence of $\sum_{i=1}^nX_i/\sqrt{n\log n}$ to Gaussian distributions.
Our result implies that such asymptotic laws are accurate only for
exponentially large $n$.
\end{narrow}

\begin{center}
{\bf Résumé}
\end{center}

\begin{narrow}{0.41 in}{0.41 in}
Nous étudions le théorème central limite dans le domaine d’attraction non-normal, vers des limites symétriques et $\alpha$-stables,  $0<\alpha\leq2$. Nous montrons que pour les suites $X_i$ i.i.d., les taux de convergence en $L^\infty$ des densités et des distributions de $\sum_i^n X_i/(n^{1/\alpha}L(n))$ sont au plus logarithmiques si $L$ est une fonction non-triviale de variation lente. Plusieurs lois physiques asymptotiques sont basées sur la convergence des suites $\sum_{i=1}^nX_i/\sqrt{n\log n}$ vers des distributions Gaussiennes. Nos résultats montrent que ces lois ne sont précises que pour $n$ d’une grandeur exponentielle.
\end{narrow}

\vskip 15pt

\begin{center}
{\bf Introduction}
\end{center}

In the kinetic theory of gases and related areas, there are a number
of limit laws that involve logarithmic scaling. Our interest in the
subject was initiated by B\"orgers et al.~\cite{borgers1992diffusion},
where we studied the evolution of free molecular flow in a region
bounded by parallel plates with Maxwellian reflection on the boundaries.
Using probabilistic methods, we showed that in the limit of vanishing
gap height, diffusion occurs on the ``anomalous" time scale $1/(h|\log h |)$,
where $h$ is the properly non-dimensionalized separation of the plates.
More recently, Chumley et al \cite{chumley2016diffusivity} have
obtained diffusion results, both standard and anomalous, for related
kinetic problems in a very broad class of geometries. Anomalous diffusion
results involving logarithmic scaling have also been obtained for
Birkhoff sums in the stadium billiard problem \cite{balint2006limit},
for diffusion in a Lorentz gas \cite{dettmann2014diffusion}, for
iterations of certain one-dimensional maps \cite[page 88, remark 2]{gouezel2004central},
and in two-dimensional turbulence models \cite{jimenez1996algebraic}.

In the research cited above, at the core of the results are central
limit theorems for random variables of infinite variance. The limiting
distributions are Gaussian, as in the classical central limit theorem,
but the infinite variance of the summands necessitates scaling by
$\sqrt{n\log n}$ instead of 
$\sqrt{n}$. 
The summands are independent
and identically distributed, or, in the deterministic problems, have
asymptotically vanishing correlations. The log term in the diffusion
time scale is a consequence of the logarithmic scaling in the central
limit theorem. 

Here we consider, more generally, central limit theorems for random
variables with infinite variance in which the limiting distribution
is symmetric and $\alpha$-stable, with $0<\alpha\leq2$ \cite{ChristophWolf,Zolo}; see \cite{nolan2010bibliography}
for an extensive bibliography on stable distributions and applications.
We consider summands outside the normal domain of attraction; that
is, we assume scaling not by $n^{1/\alpha}$ but by $n^{1/\alpha}L(n)$,
where $L$ is a slowly varying function 
such that $L(n)$ tends to $0$ or to $\infty$ as $n\rightarrow\infty$.
In the examples discussed earlier, $\alpha=2$ and $L(n)=\sqrt{\log n}$.

A natural question is how long must one wait in order for the asymptotic
result to provide a good approximation of the physical phenomenon
being modeled. 
 We
will not attempt to review the considerable body of literature concerning
rates of convergence \textit{inside} the normal domain of attraction
here.
In this paper, we use a scaling argument to prove that convergence
of the distribution and density functions in  $L^{\infty}$  is always slow {\em outside} the normal domain of attraction.
We show that the rate of convergence cannot be better than inverse
logarithmic. Hence any approximation based on a central limit theorem
outside the normal domain of attraction of a stable distribution will
be accurate only for an exponentially large number of summands.

The rates of convergence in central limit theorems depend critically
on the shapes of the distributions and the corresponding scalings
of the partial sums. The main results in this paper provide general best-case
bounds on the rates of convergence. There are several results providing
{\em exact} rates of convergence for special families of distributions and
scalings outside
the normal domain of attraction; see for instance Juozulynas
and Paulauskas \cite{juozulynas1998some}, Kuske and Keller \cite{keller2001rate},
and N\'andori \cite{nandori2011recurrence}. We conclude this paper with an example of a family of random variables which have an $O(\log(\log n))/\log n$  rate of convergence under a natural scaling and the improved $O(1/\log n)$ rate of convergence under an alternative scaling. 

We note  that 
Cristadoro et al \cite{cristadoro2014measuring} gave an analysis, 
supported by numerical simulations, of anomalous diffusion in the 
Lorentz gas, and found slow convergence to the
limiting distribution.

\vskip 15pt
\begin{center}
{\bf Results}
\end{center}

The two theorems in this paper demonstrate that rates of convergence outside
the normal domain of attraction, with scaling  $n^{1/\alpha}L(n)$,
are at best of order 
$1-L(n)/L(2n)$.
The proposition following the theorems makes clear why we call this convergence
slow and ``at best logarithmic.''

The first theorem provides a best-case bound on the rate of convergence of the error as measured by the Kolmogorov metric  \cite{Rachevetal}, $d$, which is defined as follows. Let $X,Y$ be random variables, with corresponding distribution functions $F_X,F_Y$. The Kolmogorov distance between $X$ and $Y$ is 
$$
d(X,Y) = \| F_X - F_Y \|_\infty.
$$

The proof of Theorem \ref{thm:firstone} relies on subadditivity properties of the Kolmogorov distance \cite[Section 4.2]{boutsikas2001compound}. We recall these properties in the following two lemmas.

\begin{lemma} 
\label{lemma:shift_Kolmogorov} 
Let $X$, $Y$, and $Z$ be random variables, and assume that $Z$ is independent of
$X$ and of $Y$. Then 
$$
d (X+Z,Y+Z) \leq d(X,Y).
$$
\end{lemma}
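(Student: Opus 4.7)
The plan is to exploit the independence of $Z$ by writing the distribution functions of $X+Z$ and $Y+Z$ as convolutions against the distribution of $Z$, so that the pointwise difference of the two distribution functions becomes an average over $z$ of the differences $F_X(t-z)-F_Y(t-z)$. Each such difference is bounded uniformly by $\|F_X-F_Y\|_\infty = d(X,Y)$, and averaging cannot increase this bound.

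Concretely, I would first note that by independence, for every real $t$,
\[
F_{X+Z}(t) - F_{Y+Z}(t) = \int_{-\infty}^{\infty} \bigl[F_X(t-z)-F_Y(t-z)\bigr]\, dF_Z(z).
\]
Next I would take absolute values inside the integral and use the fact that $|F_X(s)-F_Y(s)|\leq d(X,Y)$ for every $s$, to obtain
\[
|F_{X+Z}(t) - F_{Y+Z}(t)| \leq d(X,Y)\int_{-\infty}^{\infty} dF_Z(z) = d(X,Y).
\]
Finally I would take the supremum over $t\in\mathbb{R}$ on the left-hand side to conclude that $d(X+Z,Y+Z)\leq d(X,Y)$.

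There is essentially no obstacle here; the only mild point worth being careful about is justifying the convolution identity for $F_{X+Z}$, which follows from independence together with Fubini's theorem applied to the joint law of $(X,Z)$ (respectively $(Y,Z)$). Since the measure $dF_Z$ is a probability measure, no integrability issue arises when bounding the integrand by the constant $d(X,Y)$.
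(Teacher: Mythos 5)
Your proof is correct and follows essentially the same route as the paper: write $F_{X+Z}$ and $F_{Y+Z}$ as Lebesgue--Stieltjes convolutions against $dF_Z$, bound the integrand pointwise by $d(X,Y)$, and take the supremum. No gaps; the Fubini remark you add is the standard justification the paper leaves implicit.
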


\begin{proof} 
The distribution function of $X+Z$ is given by the Lebesgue-Stieltjes integral
$$
F_{X+Z}(s) = 
\int_{-\infty}^{\infty} F_X(s-z) dF_Z(z), 
$$
and similarly
$$
F_{Y+Z}(s) = 
\int_{-\infty}^{\infty} F_Y(s-z) dF_Z(z). 
$$
Hence,
\begin{eqnarray*}
d(X+Z,Y+Z) &=&  \sup_{s \in \R} \left| F_{X+Z}(s) - F_{Y+Z}(s) \right|  \\ 
&=& \sup_{s \in \R} \left| \int_{-\infty}^{\infty} F_X(s-z) ~\! dF_Z(z)  - 
\int_{-\infty}^{\infty} F_Y(s-z) ~\! dF_Z(z) \right|  \\ 
&\leq&  \sup_{s \in \R} \int_{-\infty}^{\infty}  \left| F_X(s-z) - F_Y(s-z) \right| dF_Z(z)  \\ 
&\leq&   \int_{-\infty}^{\infty} \sup_{s \in \R} \left| F_X(s-z) - F_Y(s-z) \right| dF_Z(z)  \\ 
&=& \int_{-\infty}^\infty d(X,Y)~\!  dF_Z(z) = d(X,Y).
\end{eqnarray*}
\end{proof}

\begin{lemma} 
\label{secondlemma}
Let $(X_1,Y_1)$ and $(X_2,Y_2)$ be independent pairs of random variables.
Then
$$
d(X_1+X_2,Y_1+Y_2) \leq d(X_1,Y_1) + d(X_2,Y_2).
$$
\end{lemma}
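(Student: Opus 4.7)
The plan is to reduce to Lemma \ref{lemma:shift_Kolmogorov} via a single application of the triangle inequality for $d$. Since $d$ is the sup-norm of a difference of distribution functions, the triangle inequality holds automatically. Introducing $Y_1 + X_2$ as an intermediate random variable, I would write
$$
d(X_1+X_2,\, Y_1+Y_2) \le d(X_1+X_2,\, Y_1+X_2) + d(Y_1+X_2,\, Y_1+Y_2).
$$

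Each of the two terms on the right can then be controlled by Lemma \ref{lemma:shift_Kolmogorov}. For the first term, $X_2$ plays the role of the common summand $Z$; the hypothesis that the pairs $(X_1,Y_1)$ and $(X_2,Y_2)$ are independent ensures that $X_2$ is independent of both $X_1$ and $Y_1$, so the lemma yields $d(X_1+X_2, Y_1+X_2) \le d(X_1,Y_1)$. For the second term, $Y_1$ plays the role of $Z$; the same independence hypothesis gives $Y_1$ independent of $X_2$ and of $Y_2$, so the lemma yields $d(Y_1+X_2, Y_1+Y_2) \le d(X_2,Y_2)$. Summing the two bounds produces the stated inequality.

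There is essentially no obstacle here; the argument is a direct combination of Lemma \ref{lemma:shift_Kolmogorov} with the triangle inequality, and the choice of intermediate (equally well $X_1+Y_2$ would work) is inessential. The only point requiring care is verifying that the independence hypothesis of Lemma \ref{lemma:shift_Kolmogorov} is met at each invocation, which follows at once from the joint independence of the two pairs $(X_1,Y_1)$ and $(X_2,Y_2)$.
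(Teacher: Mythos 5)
Your argument is correct and coincides with the paper's own proof: the same intermediate term $Y_1+X_2$, the triangle inequality, and two applications of Lemma \ref{lemma:shift_Kolmogorov}, with the independence of the pairs justifying each application. Nothing further is needed.
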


\begin{proof}
By the triangle inequality and Lemma \ref{lemma:shift_Kolmogorov}, we have 
\begin{eqnarray*}
d(X_1+X_2,Y_1+Y_2)  &\leq& d(X_1+X_2,Y_1+X_2) + d(Y_1+X_2,Y_1+Y_2)  \\
&\leq& d(X_1,Y_1) + d(X_2,Y_2).
\end{eqnarray*}
\end{proof}

Before stating and proving our theorems, we introduce some notation
that we will use in the rest of this paper. When $G$ is the distribution
function of a random variable $X$ and $a\in\mathbb{R}$, we denote
by $T_aG$ the distribution function of the scaled random variable
$aX$, so
\[
T_aG(x)=G(x/a).
\]
 Similarly, if $q$ is the density function of $X$, we denote
by $\tau_aq$ the density function of $aX$, so 
\[
\tau_aq(x)=\left(1/a\right)q(x/a).
\]

\begin{theorem}
\label{thm:firstone}Let $X_{1},X_{2},\ldots$ be a sequence of independent,
identically distributed, random variables. Denote by $S_{n}$ the sum $S_{n}=\sum_{i=1}^{n}X_{i}$.
Assume that for some $\alpha,$ $0< \alpha \leq 2,$ 
\[
\frac{S_{n}}{n^{1/\alpha}L(n)}\Longrightarrow Z,
\]
where $Z$ is symmetric and $\alpha$-stable with distribution function
$F$, $L$ is a slowly varying function, and the symbol $\Rightarrow$ denotes convergence in distribution.
Denote by $F_{n}$ the distribution function
\[
F_{n}(x)=P\left(\frac{S_{n}}{n^{1/\alpha}L(n)}\leq x\right).
\]
Then for all $C<1$ and  $z \in \R$,
$$
2\|F_{n}-F\|_{\infty}+\|F_{2n}-F\|_{\infty}\geq C\left|zF^{\prime}(z)\right| \left|1-\frac{L(n)}{L(2n)}\right|.
$$
Hence, provided $L(n)\neq L(2n)$ for all sufficiently large $n$,
\[
\limsup_{n \rightarrow \infty} \frac{\| F_n - F \|_\infty}{|1-L(n)/L(2n)|} > 0 ~~\mbox{or} ~~
\limsup_{n \rightarrow \infty} \frac{\| F_{2n} - F \|_\infty}{|1-L(n)/L(2n)|} > 0. 
\] 
\end{theorem}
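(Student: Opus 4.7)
My plan is to exploit two ingredients: (i) the decomposition $S_{2n} = S_n + S_n'$ into two independent blocks of length $n$, each distributed like $S_n$; and (ii) the exact self-similarity of the limit, namely that for independent copies $Z_1,Z_2$ of $Z$ one has $Z_1+Z_2 \stackrel{d}{=} 2^{1/\alpha} Z$. Since $L$ is only slowly varying rather than exactly power-law, the two scalings $(2n)^{1/\alpha}L(2n)$ and $2^{1/\alpha}\cdot n^{1/\alpha}L(n)$ disagree by the factor $L(2n)/L(n)\to 1$, and this mismatch will produce the lower bound. Concretely, with $U,U'$ i.i.d.\ of law $F_n$, I get $S_{2n}/((2n)^{1/\alpha}L(2n)) = a_n(U+U')$, where $a_n := L(n)/(2^{1/\alpha}L(2n))$. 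Letting $G_n$ and $G$ denote the distribution functions of $U+U'$ and $Z_1+Z_2$ respectively, a direct change of variable (using $F(x) = G(2^{1/\alpha}x)$) gives, for any $z\in\R$,
$$F_{2n}(z) - F(z) = G_n(\lambda_n u) - G(u), \qquad u := 2^{1/\alpha}z, \quad \lambda_n := L(2n)/L(n).$$

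Next, I will apply the triangle inequality
$$|G_n(\lambda_n u) - G(u)| \geq |G(\lambda_n u) - G(u)| - |G_n(\lambda_n u) - G(\lambda_n u)|,$$
and control the subtracted term by Lemma \ref{secondlemma}: applied to the independent pairs $(U,Z_1)$ and $(U',Z_2)$, it yields $\|G_n - G\|_\infty \leq 2\|F_n - F\|_\infty$. Since the left-hand side is dominated by $\|F_{2n}-F\|_\infty$, rearranging produces
$$2\|F_n - F\|_\infty + \|F_{2n} - F\|_\infty \geq |G(\lambda_n u) - G(u)|,$$
which cleanly isolates the purely scaling-induced part of the error on the right-hand side.

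The final step is to lower-bound $|G(\lambda_n u) - G(u)|$. Since stable densities are smooth and $\lambda_n \to 1$, the mean value theorem gives $|G(\lambda_n u) - G(u)| = |u G'(\xi_n)||\lambda_n - 1|$ for some $\xi_n$ between $u$ and $\lambda_n u$, and continuity of $G'$ forces $G'(\xi_n) \to G'(u)$. A short computation from $G(x) = F(x/2^{1/\alpha})$ gives the identity $uG'(u) = zF'(z)$, while $|\lambda_n - 1| = \lambda_n|1 - L(n)/L(2n)|$ with $\lambda_n\to 1$ converts the multiplier to $|1 - L(n)/L(2n)|$ up to a factor tending to $1$. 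Combining these, for any prescribed $C<1$ and any $z\in\R$, the stated inequality holds for all sufficiently large $n$, and the $\limsup$ dichotomy follows at once by picking any $z$ with $F'(z)\neq 0$. The main obstacle I anticipate is not any single estimate but the bookkeeping needed to combine three separate near-unity factors ($G'(\xi_n)/G'(u)$, $\lambda_n$, and the MVT remainder) so that an arbitrary $C<1$ is reached in the limit; conceptually the entire bound is driven by the single observation that slow variation forces the doubling scaling to fail by an amount exactly of order $1-L(n)/L(2n)$.
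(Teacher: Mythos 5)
Your proposal is correct and follows essentially the same route as the paper: the block decomposition $S_{2n}=S_n+\tilde S_n$, Lemma \ref{secondlemma} together with the stability identity $Z_1+Z_2\stackrel{d}{=}2^{1/\alpha}Z$ to bound the mismatched-scaling error by $2\|F_n-F\|_\infty$, a triangle-inequality rearrangement isolating the pure scaling discrepancy $|F(\lambda_n z)-F(z)|$, and a first-order (MVT/Taylor) expansion using smoothness of the stable density, with the factor $\lambda_n\to 1$ absorbing the difference between $|\lambda_n-1|$ and $|1-L(n)/L(2n)|$. The only remark worth adding is trivial: for the final $\limsup$ dichotomy you need $zF'(z)\neq 0$, i.e.\ $z\neq 0$ as well as $F'(z)\neq 0$, which is immediate since the symmetric stable density is positive away from the origin.
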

\begin{proof}
Define $\tilde{S}_{n}=\sum_{i=n+1}^{2n}X_{i}$ and let $Z_{1},Z_{2}$
be independent copies of $Z.$ From Lemma \ref{secondlemma} and the $\alpha-$stability and symmetry
of $Z$, we
have that
\begin{align}
\nonumber
\left\Vert T_{L(2n)/L(n)}F_{2n}-F\right\Vert _{\infty} & =d\left(\frac{L(2n)}{L(n)}\frac{S_{2n}}{\left(2n\right)^{1/\alpha}L(2n)},Z\right)\\
\nonumber
 & =d\left(\frac{S_{n}}{\left(2n\right)^{1/\alpha}L(n)}+\frac{\tilde{S}_{n}}{\left(2n\right)^{1/\alpha}L(n)},\frac{Z_{1}}{2^{1/\alpha}}+\frac{Z_{2}}{2^{1/\alpha}}\right)\\
 \nonumber
 & \leq2d\left(\frac{S_{n}}{\left(2n\right)^{1/\alpha}L(n)},\frac{Z}{2^{1/\alpha}}\right)\\
 \nonumber
 & =2d\left(\frac{S_{n}}{n^{1/\alpha}L(n)},Z\right)\\
 & =2\left\Vert F_{n}-F\right\Vert _{\infty}.
 \label{one}
\end{align}
On the  other hand, 
\begin{equation}
\|{}T_{L(2n)/L(n)}F_{2n}~ - ~~ T_{L(2n)/L(n)}F\|_{\infty}=\|F_{2n}-F\|_{\infty}.
\label{two}
\end{equation}

Let $C<1$ and $z \in \R$. Using  (\ref{one}) and  (\ref{two}), then the triangle inequality and the smoothness of $F$,
\begin{align*}
2\|F_{n}-F\|_{\infty}+\|F_{2n}-F\|_{\infty} & \geq\left\Vert T_{L(2n)/L(n)}F_{2n}-F\right\Vert _{\infty}+\|{}T_{L(2n)/L(n)}F_{2n}-T_{L(2n)/L(n)}F\|_{\infty}\\
 & \geq\left\Vert T_{L(2n)/L(n)}F-F\right\Vert _{\infty} \\
 & =\sup_{x \in\mathbb{R}}\left|F\left(\frac{L(n)}{L(2n)} x\right)-F(x)\right|  \\
  & \geq \left|F\left(\frac{L(n)}{L(2n)} z \right)-F(z)\right| \\
  & = \left| F'(z) \left( \frac{L(n)}{L(2n)} - 1 \right) z+ O \left( \frac{L(n)}{L(2n)} - 1 \right)^2 \right|  \\
  & \geq C |zF'(z)| \left|1- \frac{L(n)}{L(2n)}  \right| 
 \end{align*}
for all sufficiently large $n$, as claimed.
\end{proof}
A similar argument shows slow convergence of the density functions.
\begin{theorem}
Let $X_{1},X_{2},\ldots$ be a sequence of independent
random variables with a common density. 
Denote by $S_{n}$ the sum $S_{n}=\sum_{i=1}^{n}X_{i}$.
Denote by $\rho_n$ the density
of $S_{n}/ ( n^{1/\alpha}L(n)),$ where \textup{$0<\alpha\leq2$
}and $L$ is slowly varying. Assume that
there is a symmetric and $\alpha$-stable random variable $Z$ with
density $\rho$ 
such that $\left\Vert \rho_n- \rho \right\Vert _{\infty}\rightarrow0$
as $n\rightarrow\infty$. Then for all $C<1$ and $z \in \R$,\textup{
\begin{equation}
\label{eq:low_bound_rho}
2^{(\alpha+1)/\alpha}\left\Vert \rho_n-\rho\right\Vert _{\infty}+\left\Vert \rho_{2n}-\rho \right\Vert _{\infty}\geq C \left|z \rho'(z) + \rho(z) \right|\left|1-\frac{L(n)}{L(2n)}\right|
\end{equation}
}for all sufficiently large $n$. Hence, provided $L(n)\neq L(2n)$ for all sufficiently large $n$,
\begin{equation}
\label{limsup_rho}
\limsup_{n\rightarrow\infty}\frac{\left\Vert \rho_n-\rho\right\Vert _{\infty}}{\left|1-L(n)/L(2n)\right|}>0 ~~\mbox{or} ~~
\limsup_{n\rightarrow\infty}\frac{\left\Vert \rho_{2n}-\rho\right\Vert _{\infty}}{\left|1-L(n)/L(2n)\right|}>0.
\end{equation}

\end{theorem}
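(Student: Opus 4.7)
The strategy mirrors the proof of Theorem \ref{thm:firstone}, with Kolmogorov subadditivity replaced by a convolution estimate and extra care taken for how rescaling affects the $L^\infty$ norm of a density. First I would establish the density analog of Lemma \ref{secondlemma}: if $(X_i,Y_i)$, $i=1,2$, are independent pairs with respective densities $p_i,q_i$, then the telescoping $p_1*p_2 - q_1*q_2 = (p_1-q_1)*p_2 + q_1*(p_2-q_2)$ together with Young's inequality $\|f*g\|_\infty \leq \|f\|_1\|g\|_\infty$ and the fact that densities have unit $L^1$ norm yields $\|p_1*p_2 - q_1*q_2\|_\infty \leq \|p_1-q_1\|_\infty + \|p_2-q_2\|_\infty$.

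Next I would compute the density of $S_{2n}/((2n)^{1/\alpha}L(n))$ in two ways: as a rescaling of the density of $S_{2n}/((2n)^{1/\alpha}L(2n))$, giving $\tau_{L(2n)/L(n)}\rho_{2n}$; and, via the independent split $S_{2n} = S_n + \tilde S_n$, as $\tau_{2^{-1/\alpha}}(\rho_n * \rho_n)$. Since symmetry and $\alpha$-stability of $Z$ give $\rho = \tau_{2^{-1/\alpha}}(\rho*\rho)$, subtracting and using $\|\tau_{2^{-1/\alpha}} g\|_\infty = 2^{1/\alpha}\|g\|_\infty$ together with the convolution bound produces the density analog of (\ref{one}):
\[
\|\tau_{L(2n)/L(n)}\rho_{2n} - \rho\|_\infty \leq 2^{(\alpha+1)/\alpha}\|\rho_n - \rho\|_\infty.
\]

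Unlike in the distribution function setting, scaling changes the density $L^\infty$ norm: $\|\tau_a f - \tau_a g\|_\infty = (1/a)\|f-g\|_\infty$. Setting $a = L(2n)/L(n)$, the triangle inequality then gives
\[
\|\tau_a \rho - \rho\|_\infty \leq 2^{(\alpha+1)/\alpha}\|\rho_n-\rho\|_\infty + \frac{L(n)}{L(2n)}\|\rho_{2n}-\rho\|_\infty.
\]
For the matching lower bound at a point $z$, write $s = L(n)/L(2n) = 1/a$. The map $s \mapsto s\rho(sz)$ is smooth (stable densities are $C^\infty$) with derivative $\rho(z)+z\rho'(z)$ at $s=1$, so $\tau_a\rho(z) - \rho(z) = (s-1)(\rho(z)+z\rho'(z)) + O((s-1)^2)$. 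Since $|s-1| = |1-L(n)/L(2n)| \to 0$ by slow variation, for any $C''<1$ and $n$ sufficiently large,
\[
\|\tau_a\rho - \rho\|_\infty \geq C''\,|z\rho'(z)+\rho(z)|\,|1-L(n)/L(2n)|.
\]
Combining, and using $L(n)/L(2n)\to 1$ to absorb the prefactor on $\|\rho_{2n}-\rho\|_\infty$ into a slightly smaller constant $C<C''<1$, delivers (\ref{eq:low_bound_rho}). Finally, (\ref{limsup_rho}) follows by contradiction: if both limsups vanished, dividing (\ref{eq:low_bound_rho}) by $|1-L(n)/L(2n)|$ and letting $n\to\infty$ would force $(z\rho(z))' = z\rho'(z)+\rho(z) \equiv 0$, so $z\rho(z)$ is constant; since $\rho$ is continuous at $0$ and a nontrivial density, this constant must be zero, giving $\rho\equiv 0$ off the origin, a contradiction.

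The main obstacle is bookkeeping around the extra $L(n)/L(2n)$ factor in the upper bound on $\|\tau_a\rho-\rho\|_\infty$: rescaling a density multiplies its sup norm by the reciprocal scaling, so unlike in Theorem \ref{thm:firstone} one cannot get a clean subadditivity identity, and must lean on the slow variation of $L$ to drive the spurious factor to $1$ and hide it inside the flexible constant $C<1$.
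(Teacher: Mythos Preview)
Your proposal is correct and follows essentially the same route as the paper: the same split $S_{2n}=S_n+\tilde S_n$, the same convolution/Young estimate to get $\|\tau_{L(2n)/L(n)}\rho_{2n}-\rho\|_\infty\le 2^{(\alpha+1)/\alpha}\|\rho_n-\rho\|_\infty$, the same scaling identity producing the extra $L(n)/L(2n)$ factor on $\|\rho_{2n}-\rho\|_\infty$, the same Taylor expansion of $s\mapsto s\rho(sz)$ at $s=1$, and the same absorption of the spurious factor into $C<1$. Your final contradiction argument for (\ref{limsup_rho}) is a slightly more explicit version of the paper's one-line remark that $\rho$ is not a constant multiple of $1/z$.
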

\begin{proof}
From the $\alpha$-stability of $Z$, we have that $ \rho =~\! \tau_{2^{-1/\alpha}}\rho \ast \tau_{2^{-1/\alpha}}\rho.$
As earlier, we write
$\tilde{S}_{n}=\sum_{i=n+1}^{2n}X_{i}$.
Since $\tau_{L(2n)/L(n)}\rho_{2n}$ is the density of 
\[
\frac{L(2n)}{L(n)}\frac{S_{2n}}{\left(2n\right)^{1/\alpha}L(2n)}=2^{-1/\alpha} ~\! \frac{S_{n}}{n^{1/\alpha}L(n)}+2^{-1/\alpha} ~\! \frac{\tilde{S}_{n}}{n^{1/\alpha}L(n)},
\]
we have that
\begin{align}
\nonumber
\left\Vert \tau_{L(2n)/L(n)}\rho_{2n}-\rho\right\Vert _{\infty} & =\left\Vert \tau_{2^{-1/\alpha}}\rho_n\ast{}\tau_{2^{-1/\alpha}}\rho_n-{}\tau_{2^{-1/\alpha}}\rho\ast{}\tau_{2^{-1/\alpha}}\rho\right\Vert _{\infty}\\
\nonumber
 & =2^{1/\alpha}\left\Vert \rho_n\ast \rho_n-\rho\ast \rho\right\Vert _{\infty}\\
 \nonumber
 & \leq2^{1/\alpha}\left(\left\Vert \rho_n\ast \rho_n-\rho_n\ast \rho\right\Vert _{\infty}+\left\Vert \rho_n\ast \rho-\rho\ast \rho\right\Vert _{\infty}\right)\\
 \nonumber
 & \leq2^{1/\alpha}\left(\left\Vert \rho_n-\rho\right\Vert _{\infty}+\left\Vert \rho_n-\rho\right\Vert _{\infty}\right)\\
 & =2^{(\alpha+1)/\alpha}\left\Vert \rho_n-\rho\right\Vert _{\infty},
 \label{one_rho}
\end{align}
since $\int \rho=\int \rho_n=1.$ 
On the other hand,
\begin{equation}
\label{two_rho}
\left\Vert \tau_{L(2n)/L(n)}\rho_{2n}-\tau_{L(2n)/L(n)}\rho\right\Vert _{\infty}=\left|\frac{L(n)}{L(2n)}\right|\left\Vert \rho_{2n}-\rho\right\Vert _{\infty}.
\end{equation}
Let $z \in \R$. Using (\ref{one_rho}) and (\ref{two_rho}), then the triangle inequality, and then 
the smoothness of $\rho$, we have
\begin{align}
\label{first_line}
& ~~ 2^{(\alpha+1)/\alpha}\left\Vert \rho_n-\rho\right\Vert _{\infty}+\left|\frac{L(n)}{L(2n)}\right|\left\Vert \rho_{2n}-\rho\right\Vert _{\infty}   \\ \nonumber
& \geq\left\Vert \tau_{L(2n)/L(n)}\rho_{2n}-\rho\right\Vert _{\infty}+\left\Vert \tau_{L(2n)/L(n)}\rho_{2n}-\tau_{L(2n)/L(n)}\rho\right\Vert _{\infty}\\ \nonumber
 & \geq\left\Vert \tau_{L(2n)/L(n)}\rho-\rho\right\Vert _{\infty}\\ \nonumber
 & =\sup_{x\in\mathbb{R}}\left|\frac{L(n)}{L(2n)}\rho\left(\frac{L(n)}{L(2n)}x\right)-\rho(x)\right| \\ \nonumber
  & \geq \left|\frac{L(n)}{L(2n)}\rho\left(\frac{L(n)}{L(2n)}z\right)-\rho(z)\right| \\ \nonumber
 & = \left| \frac{L(n)}{L(2n)} \rho \left( z + \left( \frac{L(n)}{L(2n)} - 1 \right) z \right) - \rho(z) \right| \\ \nonumber
 & = \left| \left( \frac{L(n)}{L(2n)} -1 \right) \rho(z) + \frac{L(n)}{L(2n)} \rho'(z) 
 \left( \frac{L(n)}{L(2n)} -1 \right)  z + O \left( \frac{L(n)}{L(2n)} -1 \right)^2 \right| \\ \nonumber
 & = \left| z \rho'(z) + \rho(z) \right| \left| 1-\frac{L(n)}{L(2n)}  \right| + 
 O  \left( 1-\frac{L(n)}{L(2n)}  \right)^2. \nonumber
 \end{align}
This implies the estimate (\ref{eq:low_bound_rho}); note that the factor of $|L(n)/L(2n)|$ appearing
in (\ref{first_line}) becomes irrelevant in the limit as $n \rightarrow 1$, as its limit equals 1.  This in turn implies 
eq.\ (\ref{limsup_rho}) because $z \rho'(z) + \rho(z)$ is not identically zero, as
$\rho$ is not a constant multiple of $	1/z$.
\end{proof}

As discussed earlier, scaling by $\sqrt{n \log n}$ is  frequently encountered in the literature. 
As an example of an application of
the above theorems, we state the bounds on rates of convergence for
a generalization of this scaling.
\begin{corollary*}
Let $X_{1},X_{2},\ldots$ be a sequence of independent, identically
distributed, random variables. 
Denote by $S_{n}$ the sum $S_{n}=\sum_{i=1}^{n}X_{i}$, and by $F_n$ and $\rho_n$ the distribution and density, respectively, of $S_n/\left( n^{1/\alpha}\left(\log n\right)^{r} \right)$, for some
$\alpha$ with $0 < \alpha \leq 2$ and $r>0$.
Let $F$ and $\rho$ be symmetric, $\alpha$-stable distribution and density functions. Then
\[
\limsup_{n\rightarrow\infty }\log n\|F_n -F\|_\infty >0~\!
\]
and
\[
\limsup_{n\rightarrow\infty} \log n\|\rho_n -\rho\|_\infty >0.
\]
\end{corollary*}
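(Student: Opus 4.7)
The plan is to apply Theorem 1 and Theorem 2 directly with the slowly varying function $L(n)=(\log n)^r$. The only real computation is to identify the rate at which $1-L(n)/L(2n) \to 0$. Writing $u = (\log 2)/\log n$, a one-line Taylor expansion gives
\[
1 - \frac{L(n)}{L(2n)} = 1 - \left(1 + \frac{\log 2}{\log n}\right)^{-r} = \frac{r\log 2}{\log n} + O\!\left(\frac{1}{(\log n)^2}\right),
\]
so that $\log n \cdot \left|1 - L(n)/L(2n)\right| \longrightarrow r\log 2 > 0$ as $n\to\infty$.

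Next I would fix any $C<1$ and any point $z\neq 0$ with $F'(z) > 0$; such a $z$ exists because the density of a symmetric $\alpha$-stable law is strictly positive on $\R$. Multiplying the lower bound of Theorem 1 through by $\log n$ gives
\[
\log n \left( 2\|F_n-F\|_\infty + \|F_{2n}-F\|_\infty \right) \;\geq\; C|zF'(z)|\,\log n \left|1 - \frac{L(n)}{L(2n)}\right|,
\]
whose right-hand side tends to $C|zF'(z)|\,r\log 2 >0$. Setting $a_n := \log n\,\|F_n-F\|_\infty$ and using $\log n \leq \log(2n)$, this reads
\[
2a_n + a_{2n} \;\geq\; C|zF'(z)|\,r\log 2 \;+\; o(1).
\]
If $\limsup_n a_n$ were $0$, the subsequence $\{a_{2n}\}$ would also have $\limsup 0$, forcing $\limsup_n(2a_n + a_{2n})=0$ and contradicting the displayed bound. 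Hence $\limsup_n a_n >0$, which is the first assertion. (If the hypothesis of Theorem 1 fails, i.e.\ $S_n/(n^{1/\alpha}L(n)) \not\Rightarrow Z$, then $\|F_n-F\|_\infty$ does not even tend to zero, and the conclusion is trivial.)

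The statement about $\rho_n$ is proved in exactly the same fashion, using Theorem 2 in place of Theorem 1 and replacing the factor $|zF'(z)|$ by $|z\rho'(z)+\rho(z)|$; one picks a point $z$ at which the latter does not vanish, which is possible since $\rho$ is not a constant multiple of $1/z$, as already observed in the proof of Theorem 2. There is no genuine obstacle here: the corollary is essentially a direct substitution of $L(n)=(\log n)^r$ into the two theorems, and the only work is the Taylor expansion above and the small observation that $\limsup a_{2n} \leq \limsup a_n$ for a subsequence.
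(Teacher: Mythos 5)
Your proposal is correct and follows essentially the same route as the paper: substitute $L(n)=(\log n)^{r}$ into the two theorems and use the computation $\log n\,\bigl|1-L(n)/L(2n)\bigr|\to r\log 2>0$. The paper states the corollary as an immediate consequence of that limit, while you additionally spell out the extraction of the limsup from the displayed inequalities (and the trivial case when the theorems' convergence hypotheses fail), which is a harmless elaboration of the same argument.
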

\begin{proof}
This follows immediately from the two theorems above, since
\[
\lim_{n\rightarrow\infty}\log n\left(1-\left(\frac{\log n}{\log(2n)}\right)^{r}\right)=r\log2.
\]
\end{proof}

The following proposition makes clear why we characterize the rates of convergence in the two theorems as being ``at best logarithmic." 

\begin{proposition*}
Let $L$ be a slowly varying function such that $L(x)\rightarrow\infty$ or $L(x)\rightarrow0$
as $x\rightarrow\infty$. Then for any $\epsilon>0$, 
\begin{equation}
\limsup_{n\rightarrow\infty}(\log n)^{1+\epsilon}\left|1-\frac{L(n)}{L(2n)}\right|=\infty. \label{eq:prop2}
\end{equation}
\end{proposition*}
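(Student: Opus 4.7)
The plan is to argue by contradiction. Suppose that \eqref{eq:prop2} fails for some $\epsilon > 0$, so that there exist a constant $M$ and an index $N$ with
$$ \left| 1 - \frac{L(n)}{L(2n)} \right| \leq \frac{M}{(\log n)^{1+\epsilon}} \quad \text{for all } n \geq N. $$
In particular $L(n)/L(2n) \to 1$, so for large $n$ we may pass to logarithms and obtain
$$ \left| \log L(2n) - \log L(n) \right| \leq \frac{2M}{(\log n)^{1+\epsilon}}, $$
using $\log(1+t) = t + O(t^2)$ near $t=0$.

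Next I would restrict to the dyadic subsequence $n = 2^k$ in order to telescope. Writing $a_k = \log L(2^k)$ and using $\log(2^k) = k \log 2$, the previous estimate gives
$$ |a_{k+1} - a_k| \leq \frac{C}{k^{1+\epsilon}} $$
for all large $k$, with $C$ a new constant. Since the series $\sum k^{-1-\epsilon}$ converges, the sequence $(a_k)$ is Cauchy and hence converges to a finite limit. Consequently $L(2^k)$ converges to a finite, strictly positive limit as $k \to \infty$; in particular $L(2^k)$ is bounded and bounded away from $0$.

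The final step, which I expect to be the only nontrivial one, is to upgrade boundedness on the dyadic subsequence to boundedness along the full sequence of integers, so as to contradict the hypothesis that $L(n) \to 0$ or $L(n) \to \infty$. For this I would invoke the uniform convergence theorem for slowly varying functions: $L(\lambda x)/L(x) \to 1$ uniformly for $\lambda$ in any compact subset of $(0,\infty)$ as $x \to \infty$. Applying this with $\lambda \in [1,2]$ and $x = 2^k$, and writing an arbitrary large $n$ as $n = \lambda \cdot 2^k$ with $\lambda \in [1,2]$, we get $L(n)/L(2^k) \to 1$. Combined with the previous paragraph, $L(n)$ stays bounded and bounded away from $0$, contradicting $L(n) \to 0$ or $\infty$. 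The main obstacle is really just remembering to invoke this uniform convergence theorem, without which the dyadic estimate alone would not rule out wild oscillations between powers of $2$.
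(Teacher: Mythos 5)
Your proof is correct and follows essentially the same route as the paper's: argue by contradiction, telescope $\log L$ along powers of $2$, and play the summability of $(\log n)^{-1-\epsilon}$ over dyadic scales against the divergence of $L$ (the paper phrases this contrapositively, showing $\sum_k |1-L(2^kn)/L(2^{k+1}n)|=\infty$ and contradicting the assumed bound). Two minor remarks: you treat $L\to 0$ and $L\to\infty$ simultaneously, whereas the paper reduces the second case to the first via $1/L$; and the step you flag as the main obstacle --- upgrading from the dyadic subsequence to all $n$ via the uniform convergence theorem --- is actually unnecessary, since the hypothesis that $L(x)\to 0$ or $\infty$ as $x\to\infty$ already applies along $x=2^k$, so the convergence of $L(2^k)$ to a finite positive limit is by itself the contradiction.
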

\begin{proof}
Assume first that $L(x)\rightarrow\infty$ as $x\rightarrow\infty$. Then for all integers $n\geq1$, $K\geq1$, 
\begin{equation}
\sum_{k=0}^{K-1}\log\frac{L\left(2^{k}n\right)}{L\left(2^{k+1}n\right)}=\log L\left(n\right)-\log L\left(2^{K}n\right).\label{eq:really?}
\end{equation}
Since, for fixed $n$, $\log L\left(2^{K}n\right)$$\rightarrow\infty$ as $K\rightarrow\infty$, the sum on the left-hand side of (\ref{eq:really?}) goes to negative infinity. Hence, the series tends to infinity absolutely, and we have,
for all $n\geq1$, 
\[
\sum_{k=0}^{\infty}\left| \log\frac{L\left(2^{k}n\right)}{L\left(2^{k+1}n\right)}\right|=\infty.
\]
 Since $L$ is slowly varying, $L\left(2^{k}n\right)/L\left(2^{k+1}n\right)\rightarrow1$
as $k\rightarrow\infty$. It follows that
\begin{equation}
\sum_{k=0}^{\infty}\left|1-\frac{L\left(2^{k}n\right)}{L\left(2^{k+1}n\right)}\right|=\infty.\label{eq:infinitesum}
\end{equation}
Now assume that eq.\ (\ref{eq:prop2}) is false. Then for some $\epsilon>0$, $C>0$, and all $n\geq2$,
\[
\left|1-\frac{L(n)}{L(2n)}\right|\leq\frac{C}{(\log n)^{1+\epsilon}}.
\]
 Then
\[
\sum_{k=0}^{\infty}\left|1-\frac{L\left(2^{k}n\right)}{L\left(2^{k+1}n\right)}\right|\leq\sum_{k=0}^{\infty}\frac{C}{\left(\log\left(2^{k}n\right)\right)^{1+\epsilon}}=\sum_{k=0}^{\infty}\frac{C}{(k\log2+\log n)^{1+\epsilon}}<\infty,
\]
which contradicts  eq.\  (\ref{eq:infinitesum}) and thereby  establishes   eq.\   (\ref{eq:prop2}) for $L(x)\rightarrow\infty$. 

Now assume that $L(x)\rightarrow0$ as $x\rightarrow\infty$. Set $\calL(x)=1/L(x)$. Then $\mathcal{L}$ is slowly varying and $\mathcal{L}(x)\rightarrow\infty$ as $x\rightarrow\infty$. Hence, 
\begin{align}
\nonumber
\infty= &
\limsup_{n\rightarrow\infty}(\log n)^{1+\epsilon}\left|1-\frac{\calL(n)}{\calL(2n)}\right|\\
\nonumber
= & \limsup_{n\rightarrow\infty}(\log n)^{1+\epsilon}\left|1-\frac{\calL(2n)}{\calL(n)}\right|\\
\nonumber
=&  \limsup_{n\rightarrow\infty}(\log n)^{1+\epsilon}\left|1-\frac{L(n)}{L(2n)}\right|
\end{align}
The second equality follows from
$$
\lim_{n \rightarrow \infty} \frac{\calL(2n)/\calL(n)-1}{\calL(n)/\calL(2n)-1} = 
\lim_{n \rightarrow \infty} \left( - \frac{\calL(2n)}{\calL(n)}  \right) = -1. 
$$
\end{proof}

Note that the hypotheses of the proposition don't imply  (\ref{eq:prop2}) for $\epsilon=0$.
For example, in the canonical case discussed above, $L(n)=\sqrt{\log n},$
we find 
\[
\lim_{n\rightarrow\infty}\log n\left|1-\frac{L(n)}{L(2n)}\right|=\lim_{n\rightarrow\infty}\log n\left(1-\sqrt{\frac{\log(n)}{\log(2n)}}\right)=\frac{\log2}{2}.
\]

As discussed earlier, our theorems are only best-case estimates. The precise rates of convergence depend on the slowly varying functions in the scalings. As an example, fix a constant $A>0$ and consider a sequence $X_{1},X_{2},\ldots$ of identically distributed random variables
which have a common smooth, symmetric density function, which for large $|x|$ 
is equal to 
$A/(2|x|^3)$. 
Denote by $q_n$ the density of
\[
\frac{\sum_{i=1}^{n}X_{i}}{h(n)},
\]
where  the function $h$ is defined by $h^2 = n \log h$ and $\lim_{n \rightarrow \infty} h(n) = \infty$, and by $ \rho$ the Gaussian density with mean $0$ and variance $A$. Kuske and Keller \cite{keller2001rate} proved that $\| q_n - \rho \|_\infty=O\left(1/\log n\right)$.
However, consider the natural scaling $\sqrt{(n\log n) /2}$ and denote by $\rho_n$ the density of 
\[
\frac{\sum_{i=1}^{n}X_{i}}{\sqrt{(n\log n)/2}}.
\]
Then it can be shown that the convergence of the scaled partial sums deteriorates slightly, namely we have
\[
c \frac{\log \log n}{\log n} \leq \| \rho_n-\rho \|_\infty  \leq 
C \frac{\log \log n}{\log n}
\]
with $0 < c < C < \infty$.
This follows from the fact that $h(n) = (1+c_n)\sqrt{(n \log n)/2}$, with $c_n$ asymptotic to $\log \log n/(2\log n)$. We note that N\'andori \cite[Theorem 1]{nandori2011recurrence} proved an analogous result for 
a random walk on a lattice, with the same rate of convergence.

\vskip 10pt
\noindent
{\bf Acknowledgments.} We are grateful to G\'erard Ben Arous for very helpful suggestions. The second author thanks the Courant Institute for hosting him during the 2017/18 academic year as a Visiting Scholar.


\begin{thebibliography}{10}

\bibitem{balint2006limit}
P.~B{\'a}lint and S.~Gou{\"e}zel.
\newblock Limit theorems in the stadium billiard.
\newblock {\em Comm. Math. Phys.}, 263(2):461--512, 2006.

\bibitem{borgers1992diffusion}
C.~B{\"o}rgers, C.~Greengard, and E.~Thomann.
\newblock The diffusion limit of free molecular flow in thin plane channels.
\newblock {\em SIAM J. Appl. Math}, 52(4):1057--1075, 1992.

\bibitem{boutsikas2001compound}
M.~V. Boutsikas and M.~V. Koutras.
\newblock Compound {P}oisson approximation for sums of dependent random
  variables,
\newblock in C.~A. Charalambides, M.~V. Koutras, and N.~Balakrishnan (Eds.),
 {\sl Probability and Statistical Models with Applications: A volume in honour of Prof. T.~Cacoullos}, 2001, pp. 63--86.

\bibitem{ChristophWolf}
G.~Christoph and W.~Wolf.
\newblock Convergence theorems with a stable limit law. 
\newblock Akademie Verlag, Berlin, 1992.


\bibitem{chumley2016diffusivity}
T.~Chumley, R.~Feres, and H.-K. Zhang.
\newblock Diffusivity in multiple scattering systems.
\newblock {\em Trans. Amer. Math. Soc.}, 368(1):109--148, 2016.

\bibitem{cristadoro2014measuring}
G.~Cristadoro, T.~Gilbert, M.~Lenci, and D.~P. Sanders.
\newblock Measuring logarithmic corrections to normal diffusion in
  infinite-horizon billiards.
\newblock {\em Phys. Rev. E}, 90(2):022106, 2014.

\bibitem{dettmann2014diffusion}
C.~P. Dettmann.
\newblock Diffusion in the {L}orentz gas.
\newblock {\em Commun. Theor. Physics}, 62(4):521--540, 2014.

\bibitem{gouezel2004central}
S.~Gou{\"e}zel.
\newblock Central limit theorem and stable laws for intermittent maps.
\newblock {\em Prob. Theory and Related Fields}, 128(1):82--122, 2004.

\bibitem{jimenez1996algebraic}
J.~Jim\'enez.
\newblock Algebraic probability density tails in decaying isotropic
  two-dimensional turbulence.
\newblock {\em J. Fluid Mech.}, 313:223--240, 1996.

\bibitem{juozulynas1998some}
A.~Juozulynas and V.~Paulauskas.
\newblock Some remarks on the rate of convergence to stable laws.
\newblock {\em Lith. Math. J}, 38(4):335--347, 1998.

\bibitem{keller2001rate}
R.~Kuske and J.~B. Keller.
\newblock Rate of convergence to a stable law.
\newblock {\em SIAM J. Appl. Math}, 61(4):1308--1323, 2001.

\bibitem{nandori2011recurrence}
P.~N{\'a}ndori.
\newblock Recurrence properties of a special type of heavy-tailed random walk.
\newblock {\em J. Stat. Phys.}, 142(2):342--355, 2011.

\bibitem{nolan2010bibliography}
J.~P. Nolan.
\newblock Bibliography on stable distributions, processes and related topics.
\newblock {\em {\rm http://fs2.american.edu/jpnolan/www/stable/stable.html}},
  2017.

\bibitem{Rachevetal}
S.~T.~Rachev, et al.
\newblock The methods of distances in the theory of probability and statistics. 
\newblock Springer Science \& Business Media, 2013.

\bibitem{Zolo}
V.~M. Zolotarev.
One-dimensional stable distributions. Trans. Math. Monogr. 65, AMS, 1986 (Russian, Nauka, 1983).

\end{thebibliography}

\end{document}